\documentclass{article}
\usepackage{graphicx} 
\usepackage{amsmath,amssymb}
\usepackage{amsthm}
\usepackage[abbrev]{amsrefs}
\usepackage{latexsym}
\usepackage{mathtools}
\usepackage{stmaryrd}
\numberwithin{equation}{section}
\allowdisplaybreaks
\newtheorem{thm}{Theorem}[section]

\newtheorem{lem}[thm]{Lemma}

\theoremstyle{definition}
\newtheorem{dfn}[thm]{Definition}
\theoremstyle{remark}
\newtheorem*{rem*}{Remark}

\title{Congruences for the coefficients of MacMahon-like $q$-series}
\author{Yuta Motomura, Takehiro Suda}
\date{June 2026}

\begin{document}
	
	\maketitle
	
	\begin{abstract}
		In this paper, we study the congruences for the coefficients of MacMahon-like series introduced by Bringmann--Craig--van~Ittersum--Pandey~\cite{BCIP}. We prove all the congruences conjectured in \cite{BCIP}. In addition, we establish several new congruence relations of a similar type.
	\end{abstract}
	
	\section{Introduction}
	MacMahon \cite{MacMahon} introduced for $a\in\mathbb{N}$ the $q$-series defined by
	$$\mathcal{U}_a(q)~\coloneqq~\sum_{1\leq n_1< \cdots<n_a}{\frac{q^{n_1+\cdots+n_a}}{(1-q^{n_1})^2\cdots{(1-q^{n_a})^2}}}.$$
	We consider $\mathcal{U}_a(q)$ as generalized generating functions of \textit{sum--of--divisors functions} because we have
	$$\mathcal{U}_a(q)~=~\sum_{\substack{1\leq n_1<\cdots<n_a \\ 1\leq m_1,\cdots,m_a}}m_1m_2\cdots m_aq^{n_1m_1+\cdots+n_am_a}.$$
	These functions have been studied from various perspectives. Andrews--Rose \cite{AR} showed that $\mathcal{U}_a$ is a quasimodular form. More recently, Amdeberhan--Andrews--Tauraso~\cite{AAT} and Amdeberhan--Ono--Singh \cite{AOS} showed many congruences for the Fourier coefficients of $\mathcal{U}_a$. Furthermore, Bringmann--Craig--van~Ittersum--Pandey~\cite{BCIP} introduced MacMahon-like functions, motivated by the work of Bachmann~\cite{BK}, as generalizations of MacMahon series, defined by
	$$\mathcal{A}_{a,k,r}(q)~\coloneqq~\sum_{1\leq n}{c_{a,k,r}(n)q^n}
	~\coloneqq~\sum_{1\leq n_1<\cdots<n_a}
	\frac{q^{r(n_1+\cdots+n_a)}}{(1-q^{n_1})^k\cdots{(1-q^{n_a})^k}},$$
	$$\mathcal{B}_{a,k,r,s}(q)~\coloneqq~\sum_{1\leq n}{d_{a,k,r,s}(n)q^n}
	~\coloneqq~\sum_{1\leq n_1<\cdots<n_a}
	\frac{q^{r(n_1^2+\cdots+n_a^2)+s(n_1+\cdots+n_a)}}{(1-q^{n_1})^k\cdots{(1-q^{n_a})^k}}.$$
	They showed $\mathcal{A}_{a,2r,r}$ is a quasimodular form and satisfies infinitely many congruences of the coefficients.
	
	\begin{thm}[Bringmann--Craig--van~Ittersum--Pandey\cite{BCIP}]\label{thm:bcip}
		Let $a,r,m\in\mathbb{N}$. There are infinitely many non-nested arithmetic progressions $An+B$ such that
		$$c_{a,2r,r}(An+B)\equiv0\pmod{m}.$$
	\end{thm}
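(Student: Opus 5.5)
The plan is to combine quasimodularity of $\mathcal{A}_{a,2r,r}$ with Serre's theorem on the divisibility of coefficients of modular forms modulo $m$. Write $F=\mathcal{A}_{a,2r,r}$. The first step is to observe that $\frac{q^{rn}}{(1-q^n)^{2r}}=\bigl(\frac{q^{n}}{(1-q^n)^{2}}\bigr)^r$. This function is even under $n\mapsto -n$, because $\frac{q^{-n}}{(1-q^{-n})^2}=\frac{q^n}{(1-q^n)^2}$. So it can be expanded as a polynomial in the derivatives of the basic series $\sum_m m\,q^{nm}$. In the same way as Andrews--Rose, this shows that $F$ is a quasimodular form for $\mathrm{SL}_2(\mathbb{Z})$ of mixed weight at most $2ra$, with rational coefficients and bounded denominators. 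The conclusion is a representation
$$F=\sum_{j=0}^{J} D^{j}(f_j)\qquad\text{(up to a constant term)},$$
where $D=q\frac{d}{dq}$ and each $f_j$ is a modular form, possibly together with powers of $E_2$. Using the standard structure theorem for quasimodular forms, we may take each $f_j$ to be a genuine (holomorphic) modular form of level $1$.

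Next, clear denominators. Choose $N$ with $N f_j\in\mathbb{Z}[[q]]$ for all $j$, and replace $m$ by $mN$. It then suffices to produce progressions on which every coefficient of $D^j f_j$ vanishes modulo $m$. Since the coefficients of $D^j f_j$ are $n^j a_j(n)$, it is enough to find progressions on which $a_j(n)\equiv 0\pmod m$ simultaneously for all $j$.

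Now apply Serre's theorem. For an integral modular form $f$ of integer weight, a positive density of primes $\ell$ satisfy $f\,|\,T_\ell\equiv 0\pmod m$; the same holds simultaneously for any finite set of such forms, via Chebotarev applied to the product of the associated $\ell$-adic Galois representations. For such a prime $\ell$ and any $n$ coprime to $\ell$, multiplicativity together with $a(\ell n)=(f|T_\ell)(n)-\ell^{k-1}a(n/\ell)$ gives
$$a_j(\ell n)\equiv 0\pmod m \quad\text{whenever } \ell\nmid n.$$
Iterating once more gives $a_j(\ell^2 n+\ell t)\equiv0$ for $1\le t<\ell$, and hence $c_{a,2r,r}(\ell^2 n+\ell t)\equiv 0\pmod m$. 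Letting $\ell$ range over the infinitely many admissible primes produces infinitely many progressions $\ell^2n+\ell t$, and these are non-nested because their moduli are distinct primes squared with residues that are not compatible.

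The main obstacle is the first step: establishing quasimodularity with controlled denominators, and making sure that the quasimodular decomposition yields holomorphic modular components to which Serre's theorem applies. I would try to derive this from the generating identity
$$\sum_a \mathcal{A}_{a,2r,r}X^a=\prod_n\Bigl(1+X\bigl(\tfrac{q^n}{(1-q^n)^2}\bigr)^r\Bigr),$$
taking logarithms and expressing the result through Eisenstein series. The $E_2$-dependence would be handled by writing quasimodular forms as polynomials in $E_2$ and then using $D$ to move into the derivative form.
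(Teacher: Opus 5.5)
The paper does not reprove this statement. It is quoted from BCIP, where it rests on the quasimodularity of $\mathcal{A}_{a,2r,r}$, so your outline has to stand on its own. Your quasimodularity step is fine. $\binom{t+R-1}{2R-1}$ is an odd polynomial in $t$ vanishing at $t=1,\dots,R-1$, so $\sum_n\bigl(q^n/(1-q^n)^2\bigr)^R$ is a rational combination of the series $\sum_N\sigma_{2j-1}(N)q^N$ with $1\le j\le R$. Exponentiating the logarithm of your product then gives polynomials in Eisenstein series.

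The genuine gap is your claim that the structure theorem lets you take every $f_j$ to be a holomorphic modular form of level $1$. That is false. The structure theorem reads $\widetilde{M}_w=\bigoplus_{j\ge0}D^jM_{w-2j}\oplus\mathbb{C}\,D^{w/2-1}E_2$, and the last summand cannot be absorbed into the others. Already $\mathcal{A}_{1,2,1}=\sum_N\sigma(N)q^N=(1-E_2)/24$ is not a sum of derivatives of level-one modular forms: its weight-$2$ part would have to lie in $M_2+DM_0=0$. More generally, $\sigma(N)$ occurs in the coefficients of $\mathcal{A}_{1,2r,r}$ with the nonzero coefficient $(-1)^{r-1}((r-1)!)^2/(2r-1)!$. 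Serre's theorem is a statement about modular forms and says nothing about $E_2$. As written, your argument therefore does not control the coefficients $n^j\sigma(n)$ of the components $D^jE_2$.

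The repair is short, but it forces a specific choice of primes. Let $N$ also clear the rational coefficients of the $D^jE_2$ components. Since $\sigma(\ell u)=(1+\ell)\sigma(u)$ for $\ell\nmid u$, the coefficients of these components at $\ell^2n+\ell t$ vanish modulo your enlarged modulus $mN$ as soon as $\ell\equiv-1\pmod{mN}$. So you need Serre's theorem in its precise form: a positive proportion of primes $\ell\equiv-1\pmod{mN}$ satisfy $(Nf_j)\,|\,T_\ell\equiv0\pmod{mN}$ simultaneously for all $j$. In the Chebotarev argument, this is what you get by taking $\mathrm{Frob}_\ell$ in the class of complex conjugation. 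The relevant group is the joint finite image of the reduced Galois representations together with the cyclotomic character modulo $mN$. The same choice also disposes of the Eisenstein parts of the $f_j$, whose Hecke eigenvalues are $1+\ell^{w-1}\equiv 1+(-1)^{w-1}=0$.

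With the primes chosen this way, the rest of your argument goes through. You have $a_j(\ell u)=(f_j\,|\,T_\ell)(u)$ for $\ell\nmid u$. No multiplicativity is needed, and no iteration either, since $u=\ell n+t$ is already prime to $\ell$. The progressions are non-nested because $\ell_1^2\nmid\ell_2^2$ for distinct primes.
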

	In Theorem 1.1, they showed that the coefficients of $\mathcal{A}_{a,2r,r}$ satisfy many congruences, while several explicit congruences for certain coefficients $\mathcal{A}_{a,k,r}$ and $\mathcal{B}_{a,k,r,s}$ were left as conjectures. In this paper, we solve the conjectures in \cite{BCIP} and derive numerous congruences of the coefficients of $\mathcal{A}_{a,k,r}$ and $\mathcal{B}_{a,k,r,s}$ by elementary calculation. 
	
	The following theorem, originally conjectured as Conjecture 5.1 in \cite{BCIP}, is established below:
	\begin{thm}\label{thm:conj5.1}
		{\ }
		\begin{enumerate}
			\item Let $n\in\mathbb{Z}_{\ge0}$ and $a, k, r\in\mathbb{N}$. For any prime $p$, we have
			$$c_{a,k,r}(p^{\alpha+1}n+p^{\alpha}\beta)\equiv 0\pmod{p^{v_p(k)-\alpha}}$$
			for any $0 \le\alpha\le v_p(\gcd(k, r))-1$ and $1 \le \beta \le p-1$. Here, $v_p(n)$ denotes the $p$-adic valuation of $n$.
			\item Let $n\in\mathbb{Z}_{\ge0}$ and $a, k, r, s\in\mathbb{N}$. For any prime $p$, we have $$d_{a,k,r,s}(p^{\alpha+1}n+p^{\alpha}\beta)\equiv 0\pmod{p^{v_p(k)-\alpha}}$$
			for any $0 \le\alpha\le v_p(\gcd(k, r,s))-1$ and $1 \le \beta \le p-1$.
			
			\item Let $a, k, n  \in \mathbb{N}$ and $r, s \in 2\mathbb{Z}_{\ge0}+1$. Then we have
			$$d_{a,k,r,s}(2n-1) \equiv 0\pmod{2^{v_2(k)}}.$$
		\end{enumerate}
		
	\end{thm}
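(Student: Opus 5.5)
We expand each factor as a power series and track the $p$-adic valuation of the binomial coefficients that appear. For $n\ge1$,
$$\frac{1}{(1-q^{n})^k}=\sum_{m\ge0}\binom{m+k-1}{k-1}q^{nm},$$
so
$$c_{a,k,r}(N)=\sum_{\substack{1\le n_1<\cdots<n_a,\ m_1,\dots,m_a\ge0\\ \sum_i n_i(m_i+r)=N}}\ \prod_{i=1}^a\binom{m_i+k-1}{k-1},$$
and similarly $d_{a,k,r,s}(N)$ is the same kind of sum with the constraint $\sum_i n_i(rn_i+s+m_i)=N$. The key tool is the identity
$$\binom{m+k-1}{k-1}=\frac{k}{m+k}\binom{m+k}{k},$$
which gives
$$v_p\binom{m+k-1}{k-1}\ \ge\ v_p(k)-v_p(m+k).$$
It therefore suffices to show that every term of the sum has at least one index $i$ with $v_p(m_i+k)\le\alpha$. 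Then every product is divisible by $p^{v_p(k)-\alpha}$, and the congruence follows term by term.

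For part (1), let $g=v_p(\gcd(k,r))$, so that $\alpha\le g-1$. Take $N=p^{\alpha+1}n+p^{\alpha}\beta$ with $1\le\beta\le p-1$. Then $v_p(N)=\alpha$ exactly. Since $N=\sum_i n_i(m_i+r)$, some summand must have $v_p\bigl(n_i(m_i+r)\bigr)\le\alpha$, and hence $v_p(m_i+r)\le\alpha<g$. Because $k\equiv r\pmod{p^{g}}$, we have $m_i+k\equiv m_i+r\pmod{p^g}$. Since $v_p(m_i+r)<g$, this gives $v_p(m_i+k)=v_p(m_i+r)\le\alpha$, as required.

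Part (2) is identical with $m_i+r$ replaced by $m_i+rn_i+s$. Set $g=v_p(\gcd(k,r,s))$. Then $rn_i+s\equiv0\equiv k\pmod{p^{g}}$, so $m_i+k\equiv m_i+rn_i+s\pmod{p^g}$, and the same valuation argument applies. Here the only role of $\beta$ is to force $v_p(N)=\alpha$.

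For part (3), we may assume $k$ is even, since otherwise the statement is trivial. If $N$ is odd, some summand $n_i(rn_i+s+m_i)$ must be odd. In particular $n_i$ is odd, so $rn_i+s$ is even because $r$ and $s$ are odd. Hence $m_i$ is odd, so $m_i+k$ is odd, and the valuation bound gives $2^{v_2(k)}$ dividing that factor.

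The only point requiring care is the valuation comparison $v_p(m_i+k)=v_p(m_i+\text{shift})$ once the shift is congruent to $k$ modulo $p^{g}$. This comparison is exactly why the hypothesis $\alpha\le v_p(\gcd)-1$ is needed. I do not expect a genuine obstacle beyond this step.
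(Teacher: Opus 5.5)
Your proof is correct, and its skeleton is the paper's. You expand the series as in Lemma~\ref{lem:express} and use $v_p(N)=\alpha$ (resp.\ $N$ odd) to find an index $i$ whose factor $n_i(m_i+r)$, resp.\ $n_i(rn_i+s+m_i)$, has valuation at most $\alpha$ (resp.\ is odd). You then show that this single binomial coefficient already carries the required power of $p$.

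The difference is the key lemma for that last step. The paper invokes Kummer's theorem. The integer $k-1$ has its lowest $v_p(k)$ base-$p$ digits equal to $p-1$. The integer $m_i$ (the paper's $t_i-r$, whose valuation equals $v_p(t_i)\le\alpha$ because $\alpha<v_p(r)$) has a nonzero digit in position $v_p(m_i)$. Adding them therefore produces at least $v_p(k)-v_p(m_i)$ carries.

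You instead use the absorption identity $\binom{m+k-1}{k-1}=\frac{k}{m+k}\binom{m+k}{k}$. It gives $v_p\binom{m+k-1}{k-1}\ge v_p(k)-v_p(m+k)$ for all $m\ge 0$ with no digit bookkeeping. You then move the valuation from $m_i+r$ (or $m_i+rn_i+s$) to $m_i+k$ via the congruence modulo $p^{g}$.

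In the range that matters the two bounds coincide: if $v_p(m)<v_p(k)$ then $v_p(m+k)=v_p(m)$. So neither route yields a stronger result here. Kummer's theorem would give exact valuations if one wanted more. Your route is fully elementary and handles (1)--(3) uniformly with the same one-line inequality. It also makes clear that the hypothesis $\alpha\le v_p(\gcd)-1$ is used precisely to identify $v_p(m_i+k)$ with the valuation of the shifted quantity.
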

	We call these congruences \emph{Hecke-type congruences} because similar congruences arise for Hecke eigenforms. For example, the Ramanujan tau function $\tau(n)$, which gives the Fourier coefficients of the modular discriminant $\Delta$, satisfies $\tau(5^\alpha n)\equiv0\pmod{5^\alpha}$ that is called Hecke congruence.
	
	The following congruences were also conjectured in \cite{BCIP}. Statements (1) and (2) coincide with conjectures appearing there, while statement (3) provides a generalization of the conjectured form.
	
	\begin{thm}\label{thm:specific_cong}
		{\ }
		\begin{enumerate}
			\item Let $n\in\mathbb{N}$. Then we have
			$$c_{1,3,1}(8n-4)\equiv0\pmod{7}.$$
			\item For every pair of positive integers $a$ and $n$, we have
			$$c_{3a,4,2}(3n-1)\equiv c_{3a-1,4,2}(3n-1)\equiv 0\pmod{3}.$$
			\item Let $n\in \mathbb{Z}$ and p be an odd prime. Then we have$$c_{1,2p-1,p-1}(p^2n+1)\equiv c_{1,2p-1,p}(p^2n+1) \equiv0\pmod{p}.$$
		\end{enumerate}
	\end{thm}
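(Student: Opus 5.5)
We would handle the three parts separately. Parts (1) and (3) involve only $a=1$, so we would reduce them to explicit divisor sums. Part (2) needs a genuine analysis of the product generating function modulo $3$.

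\textbf{Parts (1) and (3).} Expanding $q^{rn}/(1-q^n)^k=\sum_{j\ge0}\binom{j+k-1}{k-1}q^{n(j+r)}$ gives
$$c_{1,k,r}(N)=\sum_{de=N}\binom{e-r+k-1}{k-1},$$
where terms with $e<r$ vanish automatically.

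For (1), with $k=3$ and $r=1$, this reads $c_{1,3,1}(N)=\sum_{e\mid N}\binom{e+1}{2}=\tfrac12(\sigma_2(N)+\sigma_1(N))$. Write $N=8n-4=4M$ with $M$ odd. Multiplicativity gives
$$\sigma_2(N)+\sigma_1(N)=21\,\sigma_2(M)+7\,\sigma_1(M)\equiv0\pmod 7,$$
and $2$ is invertible modulo $7$, so (1) follows.

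For (3) with $r=p-1$, the summand is $\binom{e+p-1}{2p-2}$. Since $2p-2=1\cdot p+(p-2)$, Lucas' theorem gives $\binom{m}{2p-2}\equiv \lfloor m/p\rfloor\binom{m_0}{p-2}\pmod p$, where $m_0$ is the last base-$p$ digit of $m$. Because $N=p^2n+1\equiv1\pmod{p^2}$, every divisor $e$ is prime to $p$. Hence the summand is nonzero mod $p$ only when $e\equiv-1\pmod p$, and then it is $\equiv (e+1)/p$. In that case the complementary divisor $d=N/e$ also satisfies $d\equiv-1\pmod p$. Pair $e$ with $d$: since $(e+1)(d+1)=N+e+d+1\equiv e+d+2\pmod{p^2}$ and $p^2\mid(e+1)(d+1)$, we get $(e+1)/p+(d+1)/p\equiv0\pmod p$. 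If $e=d$, then $e^2\equiv1\pmod{p^2}$ together with $e\equiv-1\pmod p$ forces $e\equiv-1\pmod{p^2}$, so that term is itself $\equiv0$.

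The case $r=p$ is identical. The summand is $\binom{e+p-2}{2p-2}$, which survives only when $e\equiv1\pmod p$ and then equals $(e-1)/p$. Pairing uses $(e-1)(d-1)=N-e-d+1\equiv 2-e-d\pmod{p^2}$, and a fixed point $e=d$ forces $e\equiv1\pmod{p^2}$. Finally, for $N\le0$ the coefficient is $0$ by definition, and small $e$ cause no trouble since the binomial formula already gives $0$ there.

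\textbf{Part (2).} We would use the generating function
$$\sum_{a\ge0}\Bigl(\sum_N c_{a,4,2}(N)q^N\Bigr)x^a=\prod_{n\ge1}\bigl(1+x f(q^n)\bigr),\qquad f(t)=\frac{t^2}{(1-t)^4}.$$
Modulo $3$, we have $(1-t)^4\equiv(1-t^3)(1-t)$, so $f(t)\equiv t^2(1+t+t^2)/(1-t^3)^2$. Equivalently, by Lucas, $\binom{j+3}{3}\equiv\lfloor j/3\rfloor+1\pmod3$.

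The target reformulates as follows: for $N\equiv2\pmod3$, the coefficient $c_{a,4,2}(N)$ vanishes mod $3$ unless $a\equiv1\pmod 3$. Our plan is to split the product into factors with $3\mid n$ and factors with $3\nmid n$.
\begin{itemize}
\item If $3\mid n$, then $f(q^n)$ is a series in $q^3$, so these factors only shift $a$ without changing $N\bmod3$.
\item If $3\nmid n$, then $f(q^n)\equiv q^{2n}/\bigl((1-q^n)(1-q^{3n})\bigr)$.
\end{itemize}
We would then try to show that the product over $3\nmid n$, after dissecting $q$-exponents modulo $3$, has its $q^{3m+2}$-part equal to $x\,F(x^3,q^3)$ modulo $3$ for some series $F$. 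The $3\mid n$ factors must also be handled compatibly, presumably by showing they contribute only through $1+x^3(\cdots)$-type factors, or by absorbing them into the same identity.

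The step we expect to be hardest is establishing this $3$-dissection identity. Our first attempt would be to pair each $n\equiv1\pmod 3$ with $2n$, or otherwise group $n$ by residue, and rewrite $1+xf(q^n)$ modulo $3$ via
$$(1-t)^4+xt^2\equiv1-t+xt^2-t^3+t^4\pmod 3,$$
looking for a factorisation that exhibits the needed symmetry. Failing that, we would fall back on a direct combinatorial argument: an action of $\mathbb{Z}/3$ on the index tuples $(n_i,j_i)$ that preserves the weight mod $3$ and has no fixed points when $N\equiv2\pmod3$ and $a\not\equiv1\pmod3$.
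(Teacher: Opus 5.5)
Your arguments for (1) and (3) are correct and essentially follow the paper: you write $c_{1,k,r}$ as a divisor sum, apply Kummer/Lucas, and pair $e\leftrightarrow N/e$, which is the paper's pairing of residues with their inverses mod $p^2$. There is one harmless slip. For $r=p$ the surviving summand is $\binom{p-1}{p-2}\frac{e-1}{p}\equiv-\frac{e-1}{p}\pmod p$, not $+\frac{e-1}{p}$.

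The genuine gap is part (2). Nothing there is proved, and the intermediate statement you aim for is false. Look at the coefficient of $x^2q^8$. Here $c_{2,4,2}(8)=15\equiv0\pmod 3$. However, the pairs $n_1<n_2$ with $3\nmid n_1n_2$ contribute only $14\equiv2\pmod3$: this is $(n_1,n_2)=(1,2)$ with $(t_1,t_2)=(4,2),(2,3)$, giving weights $10+4$. The missing $1$ comes from $(n_1,n_2)=(1,3)$.

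So the $q^{3m+2}$-part of $\prod_{3\nmid n}(1+xf(q^n))$ is not of the form $xF(x^3,q^3)$ modulo $3$. Nor is the $3\mid n$ product a series in $x^3$ modulo $3$: its $x$-coefficient is $\sum_{3\mid n}f(q^n)\equiv q^6+\cdots$. The $q^{3m+2}$-part of the full product equals the $q^{3m+2}$-part of the first product times the invertible second product. The cancellation therefore genuinely happens across the two pieces, and the split by $3\mid n$ cannot work as planned. Your fallback $\mathbb{Z}/3$-action is never specified.

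The missing idea is one that makes the coefficients explicit. Put $x=-y^2$. Then $1-\frac{y^2t^2}{(1-t)^4}=\bigl(1-\frac{yt}{(1-t)^2}\bigr)\bigl(1+\frac{yt}{(1-t)^2}\bigr)$, so your generating function equals $G(y,q)G(-y,q)$ with $G(y,q)=\sum_a\mathcal{A}_{a,2,1}(q)y^a$. Andrews--Rose give $x(q^2;q^2)_\infty^3G(x^2,q^2)=2\sum_n T_{2n+1}(x/2)q^{n^2+n}$, whose coefficients are known in closed form.

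Multiplying two such expansions, the paper obtains $\mathcal{A}_{a,4,2}(q)=(q;q)_\infty^{-6}\sum_n W_a(n)q^n$. Here $W_a(n)$ is a signed sum of products $h(n_1,t_1)h(n_2,t_2)$ with $h(n,t)=\frac{2n+1}{2t+1}\binom{n+t}{2t}\in\mathbb{Z}$. The sum runs over $t_1+t_2=2a$, $n_i\ge t_i$, and $\frac{n_1(n_1+1)+n_2(n_2+1)}{2}=n$.

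The congruence $(q;q)_\infty^{-6}\equiv(q^3;q^3)_\infty^{-2}\pmod 3$ then reduces everything to showing $W_a(3n-1)\equiv0\pmod 3$. The exponent condition forces $n_1\equiv n_2\equiv1\pmod3$, so $3\mid 2n_i+1$. The condition $2a\not\equiv2\pmod 3$ forces some $t_i\not\equiv1\pmod 3$, so $3\nmid 2t_i+1$, which gives $3\mid h(n_i,t_i)$ term by term. Without an explicit theta-type expansion of this kind, or a fully specified fixed-point-free action, part (2) remains unproved.
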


	\begin{rem*}
		The conjecture $$c_{2,4,2}(37n)\equiv 0\pmod{19}$$
		in the earlier version of \cite[Conjecture 5.1]{BCIP} does not hold in general $n$; indeed, for $n = 222$ we have $c_{2,4,2}(37\cdot222)\equiv 1\pmod{19}$.
	\end{rem*}

	Finally, we establish an analogue of Theorem \ref{thm:bcip} in the case where $a=1$ and $k\in\mathbb{N}$.
	\begin{thm}\label{thm;arithmetic}
		Let $k, r, m \in \mathbb{N}$. There are infinitely many non-nested arithmetic progressions
		$An + B$ such that $$c_{1,k,r}(An+B) \equiv 0 \pmod{m}.$$
	\end{thm}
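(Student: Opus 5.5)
The plan is to use an explicit divisor-sum formula for $c_{1,k,r}(N)$ and then build the progressions from a prime power $\ell^{j}$ with $\ell\equiv 1$ modulo a suitable modulus. This is only a partial plan: one step, the small-divisor term $T(M)$ in the third paragraph, is not resolved.

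\textbf{Divisor-sum formula.} Expanding $(1-q^{d})^{-k}=\sum_{i\ge0}\binom{i+k-1}{k-1}q^{di}$ gives
$$c_{1,k,r}(N)=\sum_{e\mid N}g(e),\qquad g(e)=\begin{cases}\binom{e-r+k-1}{k-1}, & e\ge r,\\ 0,& e<r.\end{cases}$$
Let $P(x)=\binom{x-r+k-1}{k-1}$ be the corresponding polynomial of degree $k-1$. Then $(k-1)!\,P\in\mathbb{Z}[x]$, so $x\equiv y\pmod{m'}$ implies $P(x)\equiv P(y)\pmod m$, where $m'=m\,(k-1)!$. Also $g(e)=P(e)$ for $e\ge r$, and $P$ vanishes at $e=r-k+1,\dots,r-1$. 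Hence
$$c_{1,k,r}(N)=S(N)-T(N),\qquad S(N)=\sum_{e\mid N}P(e),\qquad T(N)=\sum_{e\mid N,\ e\le r-k}P(e).$$
In particular $T\equiv0$ automatically when $r\le k$.

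\textbf{Killing $S$.} By Dirichlet's theorem, choose infinitely many primes $\ell>r$ with $\ell\equiv1\pmod{m'}$. For each such $\ell$, take an exponent $j$ with $j+1\equiv0\pmod{m'}$. Consider $N=\ell^{j}M$ with $\ell\nmid M$. Each divisor of $N$ is $\ell^{t}e$ with $e\mid M$ and $0\le t\le j$. Since $\ell^{t}e\equiv e\pmod{m'}$, the $\ell$-fibre over $e$ contributes $(j+1)P(e)\equiv0\pmod m$ to $S(N)$. For $e>r-k$, all divisors $\ell^{t}e$ with $t\ge1$ exceed $r$, so they never enter $T(N)$. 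Therefore
$$c_{1,k,r}(\ell^{j}M)\equiv -T(M)\pmod m,$$
and $T(M)$ depends only on $\gcd(M,L)$, where $L=\operatorname{lcm}(1,\dots,r)$. The progressions are then
$$N=\ell^{j}\bigl(\ell L n+b\bigr),\qquad \ell\nmid b,$$
with $b$ in a fixed residue class modulo $L$. Because the exact power of $\ell$ dividing the terms differs for different choices of $\ell$ (or of $j$), these progressions are non-nested.

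\textbf{Main obstacle: the small-divisor term $T(M)$.} When $r\le k$ the argument above is already complete. When $r>k$, I need a divisor $d\mid L$ with
$$\sum_{e\mid d,\ e\le r-k}P(e)\equiv0\pmod m,$$
and then I would take $b\equiv d\pmod L$ with $\gcd(b/d,L)=1$. The two obvious choices do not work in general:
\begin{itemize}
\item $d=1$ gives $T=P(1)=\pm\binom{r-2}{k-1}$;
\item $d=L$ gives $T=\pm\binom{r-1}{k}$.
\end{itemize}
Neither is divisible by $m$ in general. If no suitable $d$ exists, I would let the fixed prime-power part interact with the small divisors. One option is to replace $\ell$ by a prime $p\le r$, or to use several primes with exponents $j_i$ chosen so that the products $\prod(j_i+1)$ rescale the $S$-part to cancel $T$ rather than kill it. This step is the crux of the proof, and I expect it to require the most care.
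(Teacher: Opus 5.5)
\textbf{The case $r\le k$.} Here your argument is complete, and it is essentially the paper's proof. The paper takes primes $p\equiv 1\pmod{m(k-1)!}$ and the progressions $p^{m(k-1)!}n+p^{m(k-1)!-1}=p^{j}(pn+1)$ with $j=m(k-1)!-1$. It then kills $\sum_{t\mid N}f(t)$, where $f=(k-1)!\,P$, fibre by fibre over the divisors of $pn+1$, exactly as you kill $S(N)$.

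\textbf{The case $r>k$.} The gap you flag is genuine, and it cannot be closed: the statement itself is false for some $r>k$. The paper's proof does not handle this case either. It writes $(k-1)!\,c_{1,k,r}(N)=\sum_{t\mid N}f(t)$ over \emph{all} divisors, dropping the condition $t\ge r$ from Lemma~\ref{lem:express}. That step is valid only if $f(t)=0$ for $1\le t<r$, i.e.\ $r\le k$; otherwise it silently discards exactly your term $T$.

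A concrete counterexample is $(k,r,m)=(1,2,2)$. Then $c_{1,1,2}(N)=\#\{t\ge 1:\ t\mid N\}-1$, which is even if and only if $N$ is a perfect square. No progression $An+B$ with $A\ge 1$ consists only of squares, so not even one admissible progression exists. On the paper's progressions $p^2n+p$ the value is $2\,\#\{t\ge1:\ t\mid pn+1\}-1$, which is always odd. Similarly, $c_{1,2,3}(N)=\sigma(N)-2\,\#\{t\ge1:\ t\mid N\}+1$, with $\sigma$ the sum of divisors, is even only when $N$ is a square or twice a square.

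In your notation, the first example has $T(M)=P(1)=1$ for every $M$. Your reduction then gives $c_{1,1,2}(\ell^{j}M)\equiv 1\pmod 2$ identically. So no choice of $d\mid L$, no use of small primes $p\le r$, and no rescaling by products $\prod(j_i+1)$ can rescue it, because no valid progression exists at all.

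\textbf{What your method does prove.} It gives the theorem for $r\le k$. For $r>k$, it also gives the theorem in those cases where some $d\mid L$ satisfies $\sum_{e\mid d,\ e\le r-k}P(e)\equiv 0\pmod m$. Examples are $m\mid\binom{r-2}{k-1}$ with $d=1$, or $m\mid\binom{r-1}{k}$ with $d=L$. In the second case, take $b=L$ so that $\gcd(\ell Ln+b,L)=L$. In general, the statement for $r>k$ is not true as written.
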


	\section*{Acknowledgements}
	The authors would like to express their sincere gratitude to their supervisor, Professor Yasuo Ohno, for carefully reading and providing helpful guidance. Additionally, the authors are deeply grateful to Jan-Willem van~Ittersum for his kind encouragement and for sharing the code he developed during the intermediate stages of this research.
	
	\section{Proofs of Theorem \ref{thm:conj5.1}}
	\subsection{Proof of Theorem \ref{thm:conj5.1}(1) and (2)}
	As noted in \cite[Corollary 1.4]{BCIP}, the following expressions hold.
	\begin{lem}\label{lem:express}
		Let $a,k,r,s\in\mathbb{N},$ then we have
		$$c_{a,k,r}(n)=\sum_{\substack{1\le n_1< n_2<\dots< n_a\\r\le t_1,t_2,\dots,t_a\\ \sum_{i=1}^{a}n_it_i=n}}\prod_{i=1}^{a}\binom{k-1+t_i-r}{k-1},$$
		$$d_{a,k,r,s}(n)= \sum_{\substack{1\leq n_1< n_2<\cdots< n_a\\s\leq t_1,t_2,\cdots,t_a\\r(n_1^2+\cdots+n_a^2)+\sum_{i=1}^{a}n_it_i=n}\\}\prod_{i=1}^{a}\binom{k-1+t_i-s}{k-1}.$$
	\end{lem}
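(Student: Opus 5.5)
This identity follows by expanding each factor of the defining product as a power series and then collecting the coefficient of $q^n$. The basic tool is the negative binomial expansion, valid as formal power series in $q$:
$$\frac{1}{(1-x)^k}=\sum_{j\ge 0}\binom{k-1+j}{k-1}x^j .$$

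For $\mathcal{A}_{a,k,r}$, fix an index tuple $1\le n_1<\cdots<n_a$. Each factor of the summand can be written as
$$\frac{q^{rn_i}}{(1-q^{n_i})^k}=\sum_{j_i\ge0}\binom{k-1+j_i}{k-1}q^{n_i(j_i+r)} .$$
Substituting $t_i=j_i+r$, so that $t_i\ge r$, turns this into
$$\frac{q^{rn_i}}{(1-q^{n_i})^k}=\sum_{t_i\ge r}\binom{k-1+t_i-r}{k-1}q^{n_it_i}.$$
Multiplying these $a$ series together gives
$$\sum_{t_1,\dots,t_a\ge r}\ \prod_{i=1}^{a}\binom{k-1+t_i-r}{k-1}\,q^{\sum_i n_it_i}.$$
Summing over all tuples $n_1<\cdots<n_a$ and reading off the coefficient of $q^n$ yields the stated formula for $c_{a,k,r}(n)$.

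The only point needing care is that these manipulations are legitimate as formal power series. For fixed $n$, only finitely many pairs of tuples $(n_i)$ and $(t_i)$ satisfy $\sum n_it_i=n$, because every $t_i\ge r\ge1$ and so each $n_i\le n$. The coefficient extraction is therefore a finite sum, and the rearrangement is valid.

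For $\mathcal{B}_{a,k,r,s}$, the factor $q^{r(n_1^2+\cdots+n_a^2)}$ depends only on the indices $n_i$, so we pull it out of the inner sum. The remaining factor is $q^{s\sum n_i}/\prod(1-q^{n_i})^k$, which we expand exactly as above with $r$ replaced by $s$. This gives
$$\sum_{t_i\ge s}\ \prod_{i=1}^{a}\binom{k-1+t_i-s}{k-1}\,q^{\sum_i n_it_i}.$$
Multiplying by $q^{r\sum n_i^2}$ and extracting the coefficient of $q^n$ yields the constraint $r\sum n_i^2+\sum n_it_i=n$ together with the stated weights. The same finiteness argument applies, since $rn_i^2\le n$.

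I expect no genuine obstacle here. The only step needing attention is the index shift $t_i=j_i+r$ (respectively $t_i=j_i+s$), which must be done correctly so that the lower bounds on the $t_i$ and the arguments of the binomial coefficients match the statement.
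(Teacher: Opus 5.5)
Your proof is correct and follows essentially the same route as the paper: you expand each factor $q^{rn_i}/(1-q^{n_i})^k$ by the negative binomial series, shift the index so that $t_i\ge r$ (respectively $t_i\ge s$), and read off the coefficient of $q^n$; the $\mathcal{B}$ case is handled the same way after factoring out $q^{r\sum n_i^2}$. Your finiteness remark (each $n_i\le n$ since $t_i\ge 1$, or since $rn_i^2\le n$) justifies the coefficient extraction, a point the paper leaves implicit, and you spell out the $\mathcal{B}$ case that the paper only says goes through "almost the same."
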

	
	\begin{proof}
		Expanding each term of $\mathcal{A}_{a,k,r}$ as a geometric series, we can calculate as follows.
		Here, $[q^n]f(q)$ denotes the coefficient of $q^n$ in $f(q)$.
		\begin{align*}
			c_{a,k,r}(n)&=[q^n]\sum_{1\le n_1< n_2<\dots< n_a}\prod_{i=1}^{a}\frac{q^{rn_i}}{(1-q^{n_i})^k}\\
			&=\sum_{1\le n_1< n_2<\dots< n_a}\sum_{\substack{0\le s_1,s_2,\dots,s_a\\ \sum_{i=1}^{a}s_i=n}}\prod_{i=1}^{a}[q^{s_i}]\frac{q^{rn_i}}{(1-q^{n_i})^k}\\
			&=\sum_{1\le n_1< n_2<\dots< n_a}\sum_{\substack{0\le s_1,s_2,\dots,s_a\\n_i\mid s_i, s_i\ge rn_i\\ \sum_{i=1}^{a}s_i=n}}\prod_{i=1}^{a}[q^{s_i-rn_i}]\frac{1}{(1-q^{n_i})^k}\\
			&=\sum_{1\le n_1< n_2<\dots< n_a}\sum_{\substack{0\le t_1,t_2,\dots,t_a\\ \sum_{i=1}^{a}n_i(r+t_i)=n}}\prod_{i=1}^{a}[q^{t_i}]\frac{1}{(1-q)^k}\\
			&=\sum_{1\le n_1< n_2<\dots< n_a}\sum_{\substack{0\le t_1,t_2,\dots,t_a\\ \sum_{i=1}^{a}n_i(r+t_i)=n}}\prod_{i=1}^{a}\binom{k-1+t_i}{k-1}\\
			&=\sum_{\substack{1\le n_1< n_2<\dots< n_a\\r\le t_1,t_2,\dots,t_a\\ \sum_{i=1}^{a}n_it_i=n}}\prod_{i=1}^{a}\binom{k-1+t_i-r}{k-1}.
		\end{align*}
		We note that since the range of the summation and denominator of the summand for $\mathcal{B}_{a,k,r,s}$ are the same, almost the same argument goes through for $d_{a,k,r,s}.$
	\end{proof}
	
	The second lemma we use is a classical result due to Kummer.
	
	\begin{lem}\label{lem:Kummer}\cite{Kummer}
		Let $m, n\in\mathbb{Z}_{\ge0}$ and $p$ be prime. If $m\leq n$, then $v_p\left(\binom{n}{m}\right)$ equals the number of carries that occur in the usual base-$p$ addition of $m$ and $n-m$.
	\end{lem}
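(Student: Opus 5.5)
We deduce the statement from Legendre's formula for $v_p(N!)$ together with a digit-sum bookkeeping of base-$p$ addition. Write $s_p(N)$ for the sum of the base-$p$ digits of $N\in\mathbb{Z}_{\ge0}$.

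First we prove Legendre's formula
$$v_p(N!)=\sum_{j\ge1}\left\lfloor \frac{N}{p^j}\right\rfloor=\frac{N-s_p(N)}{p-1}.$$
The first equality comes from counting, for each $j\ge 1$, the multiples of $p^j$ among $1,\dots,N$. Each integer $x\le N$ is then counted exactly $v_p(x)$ times. For the second equality, write $N=\sum_i d_ip^i$ with $0\le d_i\le p-1$. Then $\lfloor N/p^j\rfloor=\sum_{i\ge j}d_ip^{i-j}$. Summing over $j\ge1$ gives $\sum_i d_i(p^{i-1}+\cdots+1)=\sum_i d_i\frac{p^i-1}{p-1}$, which equals $\frac{N-s_p(N)}{p-1}$.

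Applying this to $\binom{n}{m}=\frac{n!}{m!\,(n-m)!}$ gives
$$v_p\left(\binom{n}{m}\right)=\frac{s_p(m)+s_p(n-m)-s_p(n)}{p-1}.$$
It therefore suffices to show that if $C$ is the number of carries in the base-$p$ addition of $m$ and $n-m$, then
$$s_p(m)+s_p(n-m)-s_p(n)=(p-1)C.$$

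For this we run the schoolbook addition. Let $a_i,b_i$ be the digits of $m$ and $n-m$, let $c_{-1}=0$, and let $c_i\in\{0,1\}$ be the carry out of position $i$. The digits $e_i$ of the sum $n$ are then determined by
$$a_i+b_i+c_{i-1}=e_i+p\,c_i.$$
Summing over all $i$ (finitely many are nonzero) gives $s_p(m)+s_p(n-m)+\sum_i c_{i-1}=s_p(n)+p\sum_i c_i$. Since $\sum_i c_{i-1}=\sum_i c_i=C$, this is $s_p(m)+s_p(n-m)-s_p(n)=(p-1)C$, as required.

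There is no real obstacle here. The only points needing care are the telescoping identity in Legendre's formula and checking that the carries are bounded by $1$, so that the $e_i$ really are base-$p$ digits. The latter holds by induction, since $a_i+b_i+c_{i-1}\le 2(p-1)+1<2p$.
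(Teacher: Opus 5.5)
Your proof is correct. The paper does not prove this lemma; it quotes it as Kummer's classical theorem with a citation and then uses it as a black box in the carry-counting arguments for Theorem~\ref{thm:conj5.1}, Lemma~\ref{lem:v3h} and Theorem~\ref{thm:specific_cong}(3).

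What you give is the standard self-contained derivation. Legendre's formula in digit-sum form, $v_p(N!)=(N-s_p(N))/(p-1)$, reduces the claim to the identity $s_p(m)+s_p(n-m)-s_p(n)=(p-1)C$. You obtain that identity by summing the column relations $a_i+b_i+c_{i-1}=e_i+pc_i$ of schoolbook addition. The individual steps all check out: the computation of $\sum_{j\ge1}\lfloor N/p^j\rfloor$, and the reindexing $\sum_i c_{i-1}=\sum_i c_i$, which uses $c_{-1}=0$ and the fact that carries eventually vanish.

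The bound $a_i+b_i+c_{i-1}\le 2p-1$ does two jobs. It makes each $e_i$ a genuine digit, so by uniqueness of base-$p$ expansions the $e_i$ are exactly the digits of $n$. It also makes $c_i\in\{0,1\}$, so that $\sum_i c_i$ literally counts the carries.

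Citing the result keeps the paper short. Your version makes the paper self-contained and also yields the closed formula $v_p\binom{n}{m}=\bigl(s_p(m)+s_p(n-m)-s_p(n)\bigr)/(p-1)$, which is sometimes easier to apply than counting carries directly.
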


	We now proceed to prove Theorem \ref{thm:conj5.1}(1) and (2).
	\begin{proof}[Proof of Theorem \ref{thm:conj5.1}(1) and (2)]
		Let $k=p^eu,\ r=p^fw$, where $e= v_p(k),\ f=v_p(r)$. 
		Under these assumptions, the constraint on $\alpha$ can be written as $0\le\alpha\le\min(e,f)-1$.
		By Lemma \ref{lem:express}, we have        
		$$\begin{aligned}
			v_p\left(c_{a,p^eu,p^fw}(p^{\alpha+1}n+p^{\alpha}\beta)\right)&\ge\min_{\substack{1\le n_1< n_2<\dots< n_a\\p^{f}w< t_1,t_2,\dots,t_a\\ \sum_{i=1}^{a}n_it_i=p^{\alpha}(pn+\beta)}}\sum_{i=1}^{a}v_p\left(\binom{p^eu-1+t_i-p^fw}{p^eu-1}\right).
		\end{aligned}$$
		
		When $(n_1,n_2,\dots,n_a)$ and $(t_1,t_2,\dots,t_a)$ satisfy the minimization condition, by the property of $\sum n_it_i=p^{\alpha}(pn+\beta)$, there exists an index $j$ such that $v_p(t_j)\le\alpha$. For this $j$, the base-$p$ expansion of $p^eu-1$ has its first $e$ digits all equal to $p-1$, and the base-$p$ expansion of $t_j-p^fw$ has its first $v_p(t_j)$-th digits equal to $0$ and the coefficient of $p^{v_p(t_j)}$ is nonzero. Then, at least $e-v_p(t_j)$ carries occur in the addition of $p^eu-1$ and $t_j-p^fw$ written in the base $p$. Hence, by Lemma \ref{lem:Kummer}, the following holds:
		
		$$v_p\left(\binom{p^eu-1+t_j-p^fw}{p^eu-1}\right)>e-v_p(t_j)\ge e-\alpha.$$
		Therefore, we can conclude that
		$$\begin{aligned}
			v_p\left(c_{a,p^eu,p^fw}(p^{\alpha+1}n+p^{\alpha}\beta)\right)&\ge e-\alpha.
		\end{aligned}$$
		A similar discussion holds for $d_{a,k,r,s}$.
	\end{proof}
	\subsection{Proof of Theorem \ref{thm:conj5.1}(3)}
	Theorem \ref{thm:conj5.1}(3) can be proved in a similar manner.
	\begin{proof}[Proof of Theorem \ref{thm:conj5.1}(3)]
		By Lemma \ref{lem:express}, we have
		$$\begin{aligned}
			v_2\left(d_{a,k,r,s}(2n-1)\right)&\ge\min_{\substack{1\le n_1< n_2<\dots< n_a\\s< t_1,t_2,\dots,t_a\\ r\sum_{i=1}^a n_i^2+\sum_{i=1}^{a}n_it_i=2n-1}}\sum_{i=1}^{a}v_2\left(\binom{k-1+t_i-s}{k-1}\right).
		\end{aligned}$$
		Here, from the property of $r\sum n_i^2+\sum n_it_i=2n-1$, there exists an index $j$ such that $t_j$ is even. For this $j$, it follows by Lemma \ref{lem:Kummer} that the following holds:
		
		$$v_2\left(\binom{k-1+t_j-s}{k-1}\right)>v_2(k)-1.$$
		
	\end{proof}
	\section{Proofs of Theorem \ref{thm:specific_cong} and \ref{thm;arithmetic}}
	\subsection{Proof of Theorem \ref{thm:specific_cong}(1)}
	\begin{proof}[Proof of Theorem \ref{thm:specific_cong}(1)]
		Using Lemma \ref{lem:express}, we obtain the following:
		\begin{align*}
			c_{1,3,1}(8n-4)&=\sum_{\substack{1\le n_1\\1\le t_1\\n_1t_1=8n-4}}\binom{t_1+1}{2}\\
			&=\sum_{t|8n-4}\dfrac{t(t+1)}{2}\\
			&=\sum_{t| 2n-1}\left(\dfrac{t(t+1)}{2}+\dfrac{2t(2t+1)}{2}+\dfrac{4t(4t+1)}{2}\right)\\
			&=\sum_{t|2n-1}\dfrac{7t(3t+1)}{2}\\
			&\equiv 0\pmod{7}.
		\end{align*}
	\end{proof}

	\subsection{Proof of Theorem \ref{thm:specific_cong}(2)}
	Before proving the Theorem \ref{thm:specific_cong}(2), we introduce two functions and two lemmas.
	\begin{dfn}
		For $n\ge t$, define $$h(n,t)\coloneqq\frac{(2n+1)(n+t)!}{(2t+1)!(n-t)!},$$ and for $n\in\mathbb{Z}_{\ge 0},$ define
		$$W_{a}(n)\coloneqq\sum_{t_1+t_2=2a}\sum_{\substack{n_1\ge t_1\\n_2\ge t_2\\\frac{n_1^2+n_1+n_2^2+n_2}{2}=n}}(-1)^{n_1+n_2}h(n_1,t_1)h(n_2,t_2).$$
	\end{dfn}
	
	\begin{lem}\label{lem:v3h}
		For all $n\equiv 1\pmod{3}$ and $t\le n$, we have
		$$v_{3}(h(n,t))\ge\begin{cases}
			0&(t\equiv 1\pmod{3})\\
			1&(t\not \equiv 1\pmod{3})\\
		\end{cases}.$$
	\end{lem}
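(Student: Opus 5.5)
The plan is to rewrite $h(n,t)$ in two ways: once to see that it is an integer (which handles the case $t\equiv1\pmod 3$) and once as a product whose $3$-adic valuation is easy to read off (which handles $t\not\equiv1\pmod 3$).

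\emph{Integrality.} For $t<n$, expand
$$\binom{n+t}{2t+1}+\binom{n+t+1}{2t+1}=\frac{(n+t)!}{(2t+1)!\,(n-t)!}\bigl((n-t)+(n+t+1)\bigr)=h(n,t).$$
When $t=n$ the first binomial coefficient is $0$, and $h(n,n)=1=\binom{2n+1}{2n+1}$, so the identity still holds. Hence $h(n,t)\in\mathbb{Z}_{\ge0}$ for all $t\le n$, and $v_3(h(n,t))\ge0$. This settles the case $t\equiv1\pmod 3$; note that this bound needs no hypothesis on $n$.

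\emph{Valuation for $t\not\equiv 1\pmod 3$.} Write
$$h(n,t)=\frac{2n+1}{2t+1}\binom{n+t}{2t},$$
so that
$$v_3(h(n,t))=v_3(2n+1)+v_3\!\left(\binom{n+t}{2t}\right)-v_3(2t+1).$$
Since $n\equiv1\pmod3$, we have $2n+1\equiv0\pmod 3$, so $v_3(2n+1)\ge1$. The binomial coefficient has nonnegative valuation. The key observation is that $3\mid 2t+1$ if and only if $t\equiv1\pmod 3$: indeed $t\equiv0$ gives $2t+1\equiv1$, and $t\equiv2$ gives $2t+1\equiv2\pmod3$. So for $t\not\equiv1\pmod3$ the denominator contributes nothing, and $v_3(h(n,t))\ge v_3(2n+1)\ge1$.

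There is essentially no obstacle. The only point needing care is integrality in the first case, since the displayed formula for $h$ is not visibly an integer; the binomial identity above settles that.
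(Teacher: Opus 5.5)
Your proof is correct. For $t\not\equiv 1\pmod 3$ it is the paper's argument: the paper uses the same formula $v_3(h(n,t))=v_3(2n+1)-v_3(2t+1)+v_3\bigl(\binom{n+t}{2t}\bigr)$ and calls the case $v_3(2n+1)\ge v_3(2t+1)$ trivial, and that case includes every $t\not\equiv 1$.

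You differ from the paper when $t\equiv 1\pmod 3$. The paper stays with the valuation formula and handles the remaining subcase $v_3(2n+1)<v_3(2t+1)$ with Kummer's theorem. There $v_3(n-t)=v_3(2n+1)$, and $2t$ ends in $v_3(2t+1)$ base-$3$ digits equal to $2$. So adding $2t$ and $n-t$ forces at least $v_3(2t+1)-v_3(2n+1)$ carries, which exactly cancels the loss from the denominator. You avoid the case split by using the identity $h(n,t)=\binom{n+t}{2t+1}+\binom{n+t+1}{2t+1}$, which shows $h(n,t)\in\mathbb{Z}$ directly.

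What your route buys: it is shorter, and it gives integrality for every $n$ and every prime at once. This reflects the fact that $(-1)^{n-t}h(n,t)$ is the coefficient of $x^{2t+1}$ in the integer polynomial $2T_{2n+1}(x/2)$. What the paper's route buys: its carry-counting stays in the Kummer-style framework used in its other proofs. In that subcase it also yields only $v_3(h(n,t))\ge 0$, so it gains no sharper information than yours.
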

	\begin{proof}Due to the identity   $v_3(h(n,t))=v_3(2n+1)-v_3(2t+1)+v_3\left(\binom{n+t}{2t}\right),$ the case $v_3(2n+1)\ge v_3(2t+1)$ is trivial. 
		
		For the remaining case, by the condition that $v_3(n-t)=v_3((2n+1)-(2t+1))=v_3(2n+1)$ and using Lemma \ref{lem:Kummer}, an inequality $v_3(\binom{n+t}{2t})\ge v_3(2t+1)-v_3(2n+1)$ is fulfilled, and hence $v_3(h(n,t))\ge 0.$ 
	\end{proof}
	
	\begin{lem}\label{lem:W3a}
		For all $a,n\in\mathbb{N},$ we have that
		$$W_{3a}(3n-1)\equiv W_{3a-1}(3n-1)\equiv 0\pmod{3}$$
	\end{lem}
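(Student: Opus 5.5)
The plan is to reduce everything to residues modulo $3$: the constraint $\frac{n_1^2+n_1+n_2^2+n_2}{2}=3n-1$ pins down both $n_1$ and $n_2$ modulo $3$, after which Lemma \ref{lem:v3h} controls the $3$-adic valuation of each summand of $W_{3a}(3n-1)$ and $W_{3a-1}(3n-1)$ term by term. No cancellation between terms should be needed. I will show that every individual summand $(-1)^{n_1+n_2}h(n_1,t_1)h(n_2,t_2)$ has $3$-adic valuation at least $1$. Lemma \ref{lem:v3h} guarantees $v_3\ge 0$ for each factor, so all terms are $3$-integral and the congruence makes sense in $\mathbb{Z}_{(3)}$. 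The factors are in fact integers, being $\frac{2n+1}{2t+1}\binom{n+t}{2t}$, but this is not needed.

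First, I analyse the constraint. Write $T(m)=\frac{m(m+1)}{2}$, so the condition reads $T(n_1)+T(n_2)=3n-1\equiv 2\pmod 3$. Computing $T(m)$ modulo $3$ by the residue of $m$ gives $T(m)\equiv 0,1,0$ for $m\equiv 0,1,2\pmod 3$ respectively ($T(2)=3$). Thus $T(m)\in\{0,1\}\pmod 3$, and $T(n_1)+T(n_2)\equiv 2$ forces $T(n_1)\equiv T(n_2)\equiv 1$, hence $n_1\equiv n_2\equiv 1\pmod 3$. Consequently Lemma \ref{lem:v3h} applies to both $h(n_1,t_1)$ and $h(n_2,t_2)$; the required condition $t_i\le n_i$ is part of the summation range.

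Second, I count the valuation. By Lemma \ref{lem:v3h}, $v_3\bigl(h(n_1,t_1)h(n_2,t_2)\bigr)\ge 1$ unless $t_1\equiv t_2\equiv 1\pmod 3$. That exceptional case would force $t_1+t_2\equiv 2\pmod 3$. For $W_{3a}$ we have $t_1+t_2=6a\equiv 0$, and for $W_{3a-1}$ we have $t_1+t_2=6a-2\equiv 1\pmod 3$. In neither case can $t_1+t_2\equiv 2$, so every summand is divisible by $3$ and both sums vanish modulo $3$.

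I expect essentially no obstacle here: the substantive $3$-adic work is already contained in Lemma \ref{lem:v3h}. The one point requiring care is the residue computation of triangular numbers modulo $3$, which is what makes the hypothesis $n\equiv 1\pmod 3$ of that lemma available for both indices simultaneously.
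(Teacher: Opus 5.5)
Your proof is correct and follows the paper's argument: residues of triangular numbers force $n_1\equiv n_2\equiv 1\pmod{3}$, and Lemma \ref{lem:v3h} then makes every summand divisible by $3$ unless $t_1\equiv t_2\equiv 1\pmod{3}$, which is impossible because $t_1+t_2\not\equiv 2\pmod{3}$. Your version of that last condition is the right one; the paper writes ``$t_1+t_2\not\equiv 1\pmod{3}$'', which must be a typo, since for $W_{3a-1}$ one has $t_1+t_2=6a-2\equiv 1\pmod{3}$.
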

	\begin{proof}
		It is easy to see that the condition $\frac{n_1^2+n_1+n_2^2+n_2}{2}\equiv 2\pmod{3}$ implies $n_1\equiv n_2\equiv 1\pmod{3}.$ We can also see that $t_1\not\equiv1\pmod{3}$ or $t_2\not\equiv 1\pmod{3}$ when $t_1+t_2\not\equiv1\pmod{3}.$
		
		Therefore, using Lemma \ref{lem:v3h}, all of the summands of $W_{3a}(3n-1)$ and $W_{3a-1}(3n-1)$ are congruent to $0\pmod{3}.$
	\end{proof}
	Using the above lemmas, we prove Theorem \ref{thm:specific_cong}(2).
	\begin{proof}[Proof of Theorem \ref{thm:specific_cong}(2)]
		Let $$F(x,q)\coloneqq 2\sum_{n=0}^{\infty}T_{2n+1}\left(\frac{x}{2}\right)q^{n^2+n},$$
		where $T_n(x)$ is the Chebyshev polynomial of the first kind of degree $n$.
		In \cite{AR}, it is shown that
		\begin{align*}
			F(x,q)&=x(q^2;q^2)_{\infty}^{3}\prod_{m=1}^{\infty}\left(1+x^2\frac{q^{2m}}{(1-q^{2m})^2}\right)\\&=(q^2;q^2)_{\infty}^{3}\sum_{a=0}^{\infty}\mathcal{A}_{a,2,1}(q^2)x^{2a+1}.
		\end{align*}
		Then, the following identity holds:
		\begin{align}
			F(x,q)F(-x,q)&=-x^2(q^2;q^2)_{\infty}^{6}\prod_{m=1}^{\infty}\left(1-x^4\frac{q^{4m}}{(1-q^{2m})^4}\right)\notag\\  &=(q^2;q^2)_{\infty}^{6}\sum_{a=0}^{\infty}(-1)^{a+1}\mathcal{A}_{a,4,2}(q^2)x^{4a+2}.\label{eq:equation_a42}
		\end{align}
		In \cite{AR}, it is also shown that
		$$F(x,q)=\sum_{t=0}^{\infty}x^{2t+1}f_t(q),$$
		where 
		$$f_t(q)\coloneqq\dfrac{(-1)^t}{(2t+1)!}\sum_{n=t}^{\infty}(-1)^n(2n+1)\dfrac{(n+t)!}{(n-t)!}q^{n^2+n}.$$
		Comparing the coefficients of $x^{4a+2}$ in (\ref{eq:equation_a42}), we can get
		\begin{align}\mathcal{A}_{a,4,2}(q^2)=(-1)^{a}(q^2;q^2)_{\infty}^{-6}\sum_{t_1+t_2=2a}f_{t_1}(q)f_{t_2}(q).\label{eq:Aa42}
		\end{align}
		We can rewrite (\ref{eq:Aa42}) as follows:
		$$\mathcal{A}_{a,4,2}(q)=(q;q)_{\infty}^{-6}\sum_{n=0}^{\infty}W_a(n)q^n.$$
		The fact that the coefficient of $q^n$ in $(q;q)_{\infty}^{-6}$ is congruent to $0$ modulo $3$ when $n\not\equiv0\pmod{3}$ and Lemma \ref{lem:W3a} induce the congruences of statement in Theorem \ref{thm:specific_cong}(2).
	\end{proof}

	\subsection{Proof of Theorem \ref{thm:specific_cong}(3)}
.
	
	\begin{proof}[Proof of Theorem \ref{thm:specific_cong}(3)]
		By Lemma \ref{lem:express}, we have $$c_{1,2p-1,r}(n)=\sum_{\substack{t\mid n\\0<t}}\binom{t-r+2p-2}{2p-2}.$$By Lemma \ref{lem:Kummer}, $\binom{t-r+2p-2}{2p-2}$ is a multiple of $p$ if and only if~ $$0\leq\exists s\leq p-2~~ \mathrm{s.t.}~~ t-r\equiv sp~~\mathrm{or}~~sp+1\pmod{p^2}.$$
		Replace $r$ with $p-1$ and $n$ with $p^2n+1$. Since $t-r$ is not congruent to $sp$ modulo $p^2$, we have $t\equiv sp-1~(1\leq s\leq p-1)$. Furthermore, $sp+1$ is an inverse element of $(p-s)p+1$ in $\mathbb{Z}/p^2\mathbb{Z}$. Thus it suffices to show that $\binom{(s+1)p-2}{2p-2}+\binom{(p-s+1)p-2}{2p-2}\equiv 0 \pmod{p}.$
		$$\begin{aligned}
			&\binom{(s+1)p-2}{2p-2}+\binom{(p-s+1)p-2}{2p-2}\\
			&=\frac{((s+1)p-2)\cdots((s-1)p+1)+((p-s+1)p-2)\cdots((p-s-1)p+1)}{(2p-2)!}\\
			&\equiv\frac{((s+1)p-2)\cdots((s-1)p+1)+((s-1)p+2)\cdots((s+1)p-1)}{(2p-2)!} \pmod{p} \end{aligned}$$
		By calculating the numerator, we have
		$$\begin{aligned}
			&\left(\sum_{i=1}^{2p-2}\frac{(2p-2)!}{i}\right)p+\left(\sum_{i=2}^{2p-1}\frac{(2p-1)!}{i}\right)p+(2p-2)!+(2p-1)!\equiv0\pmod{p^2}.
		\end{aligned}$$
		For $r=p$, $t-r$ is not congruent to $sp$ modulo $p$, so $t\equiv sp+1(1\leq s\leq p-1)$. Thus, almost the same argument goes through for $c_{1,2p-1,p}(p^2n+1)$.
	\end{proof}
	
	\begin{rem*}
		In [4], it is conjectured that $c_{1,5,2}(9n+1)\equiv 0\pmod{3}.$ This congruence corresponds to Theorem \ref{thm:specific_cong}(3) in the case $p=3$.
	\end{rem*}
	\subsection{Proof of Theorem \ref{thm;arithmetic}}

	\begin{proof}[Proof of Theorem \ref{thm;arithmetic}]
		By Dirichlet's theorem on arithmetic progressions, we can take a prime sequence $(p_i)_{i=1}^{\infty}$ such that $p_i\equiv 1\pmod{m(k-1)!}$ for all $i\in\mathbb{N}$.
		We will show that $\{(p_i^{m(k-1)!}n+p_i^{m(k-1)!-1})\mid i\in \mathbb{N}\}$ constitutes the desired family.    
		Let $$f(x)=\sum_{j=0}^{k-1}s_jx^j\coloneqq (x+k-r-1)(x+k-r-2)\cdots(x-r+1)\in \mathbb{Z}[x].$$ 
		Then, we can calculate 
		\begin{align*}
			(k-1)!c_{1,k,r}\left(p_i^{m(k-1)!}n+p_i^{m(k-1)!-1}\right)&=\sum_{t\mid p_i^{m(k-1)!}n+p_i^{m(k-1)!-1}}f(t)\\
			&=\sum_{t\mid p_in+1}\sum_{d=0}^{m(k-1)!-1}\sum_{j=0}^{k-1}s_j(tp_i^d)^j\\
			&=\sum_{j=0}^{k-1}\sum_{t\mid p_in+1}s_jt^j\sum_{d=0}^{m(k-1)!-1}p_i^{dj}\\
			&\equiv\sum_{j=0}^{k-1}\sum_{t\mid p_in+1}s_jt^j\sum_{d=0}^{m(k-1)!-1}1\\
			&\equiv 0\pmod{m(k-1)!}.
		\end{align*}   
		Hence, $$c_{1,k,r}\left(p_i^{m(k-1)!}n+p_i^{m(k-1)!-1}\right)\equiv 0\pmod{m}$$
		holds.
	\end{proof}

\end{document}